\documentclass[journal,twoside,web]{IEEEcolor}
\usepackage{generic}
\usepackage{stfloats}
\usepackage{savesym}
\usepackage{amsmath}
\usepackage{textcomp}
\savesymbol{iint}
\restoresymbol{TXF}{iint}
\usepackage{graphicx,subfigure}
\usepackage{epstopdf}
\usepackage{amssymb,amsfonts}
\usepackage{cite}
\usepackage{latexsym}
\usepackage{psfrag}
\usepackage{pifont}
\usepackage{caption}
\usepackage{color}
\usepackage{tikz}
\usepackage{bm}
\usetikzlibrary{arrows,shapes,chains}
\usepackage{bbm,mathrsfs}%
\usepackage{fancyhdr} % 添加页眉页脚
\usepackage{verbatim} % 添加注释
\usepackage{xcolor}
\usepackage{setspace}
\usepackage{algorithm}
\usepackage{algpseudocode}
\usepackage{amsmath}
\let\labelindent\relax
\usepackage{enumitem}
\newtheorem{theorem}{Theorem}

\newtheorem{example}{Example}
\newtheorem{remark}{Remark}

\newtheorem{definition}{Definition}

\def\BibTeX{{\rm B\kern-.05em{\sc i\kern-.025em b}\kern-.08em
		T\kern-.1667em\lower.7ex\hbox{E}\kern-.125emX}}
\begin{document}
\title{Distributed Fault Diagnosis in Discrete Event Systems with Transmission Delay Impairments}

\author{~Jiwei Wang,~Simone Baldi,~\IEEEmembership{Senior Member,~IEEE},~Wenwu Yu,~\IEEEmembership{Senior Member,~IEEE},\\~Xiang Yin,~\IEEEmembership{Member,~IEEE}\vspace{-1em}
	\thanks{This work was supported in part by the National Natural Science Foundation of China under Grants 61673107, 62073074, 62073076; in part by the Key Intergovernmental Special Fund of National Key Research and Development Program Grant 2022YFE0198700; and in part by the Research Fund for International Scientists Grant 62150610499. (Corresponding authors: Simone Baldi, Wenwu Yu)}
	\thanks{J. Wang, S. Baldi and W. Yu are with School of Cyber Science and Engineering, Southeast University, Nanjing 210096, China (e-mails: jwwang@seu.edu.cn; S.Baldi@tudelft.nl; wwyu@seu.edu.cn).}
	\thanks{X. Yin is with the Department of Automation and Key Laboratory of System Control and Information Processing, Shanghai Jiao Tong University, Shanghai 200240, China (e-mail: yinxiang@sjtu.edu.cn).}
}

\maketitle

\begin{abstract}
	This note studies the distributed fault diagnosis problem in partially-observed discrete event systems, where the system is monitored by a group of agents to cooperatively diagnose faults within a finite number of steps. 
	The novelty of this work is the creation of a methodology to verify when the faults can be diagnosed even in the presence of transmission delay impairments.
	To address this scenario, a new distributed diagnosability condition is proposed, which extends decentralized diagnosability conditions proposed in the literature.
	Such distributed diagnosability condition is then verified via a novel structure named delay recorder and a new diagnosis function. 
	Theoretical analysis shows that the verification method can successfully determine whether the faults can be diagnosed.
\end{abstract}
\vspace*{-1em}
\begin{IEEEkeywords}
	Discrete event systems, distributed fault diagnosis, transmission delay impairments, diagnosability
\end{IEEEkeywords}

\IEEEpeerreviewmaketitle

\section{Introduction}\label{1}

In recent decades, fault diagnosis for discrete event systems (DES) has attracted increasing attention \cite{sampath1995diagnosability,cassandras2009introduction,lafortune2018history}, with the most studied problems being the verification \cite{yin2019robust,keroglou2019verification} and the synthesis problems \cite{wang2019optimizing,hu2020design}.
In fault diagnosis of DES, the challenge is to diagnose the occurrence of a fault in finite steps by only observing limited events, while other events including the faults are unobservable. 
In such partially-observed DES, the fault diagnosis architecture is called centralized when there is only one agent monitoring all observable events \cite{yin2019general,white2019fault}.

However, as limited coverage and limited communication ability may make a centralized architecture unpractical, multi-agent architectures have been proposed, where the system is monitored by a group of agents, each one having partial observation capability \cite{debouk2000coordinated,yin2015codiagnosability,viana2019codiagnosabilitya,nunes2018codiagnosability,viana2021codiagnosability,keroglou2018distributed,veras2021distributed}.
If each agent can share its information with a few neighboring agents, the multi-agent architecture is called distributed, otherwise it is decentralized. 
Decentralized multi-agent diagnosis problems have been mostly considered in DES literature, starting from the notion of codiagnosability \cite{debouk2000coordinated}: codiagnosability is an extension of the single-agent (centralized) diagnosability to a multi-agent decentralized scenario, i.e. without information sharing among agents.
Related notions have been studied:
\cite{yin2015codiagnosability} discussed the connection between codiagnosability and coobservability;
\cite{viana2019codiagnosabilitya} revisited codiagnosability with a new necessary and sufficient condition; in \cite{nunes2018codiagnosability,viana2021codiagnosability,keroglou2018distributed,veras2021distributed}, different notions of codiagnosability and verification methods were proposed, where \cite{keroglou2018distributed} and \cite{veras2021distributed} studied distributed diagnosis under ideal transmission. It should be noted that ideal transmission allows each agent to get information with no delay, converging to a centralized scenario. 

The distributed multi-agent diagnosis problem is largely open, and in particular no framework exists to address the inevitable transmission delay impairments associated with information sharing.
Hence, this paper proposes a new diagnosis framework to handle such an issue in DES. 
The framework is able to account for restricted communication among agents, and it bridges the centralized, the distributed and the decentralized architectures in a unified way: in fact, the framework we propose comprises the decentralized scenario as the transmission impairments increase and the centralized one as the impairments vanish.

The main difficulties in developing this framework lie in dealing with the uncertain delays arising from transmitting and processing the information.
Notably, as some information may not contribute to fault diagnosis, a method should be put in place to identify those events whose delays need to be recorded.
Novel methods and structures are put forward, which form the main contributions of this paper:
we propose a new condition for distributed fault diagnosis, namely $ K^T\! $-codiagnosability (cf. Definition \ref{KTCD}), which extends in a natural way the state-of-the-art notion of codiagnosability within $K$ steps, i.e. $ K $-codiagnosability (cf. Definition \ref{KCD});
we propose a novel structure, named as delay recorder (cf. Algorithm \ref{AOS}), to record the delays required for diagnosis;
a new diagnosis function is proposed, with which $ K^T\! $-codiagnosability is verified (cf. Theorem \ref{TKT}).
Here, $T$ refers to the transmission efficiency: our framework comprises the decentralized $K$-codiagnosability as $T$ decreases (i.e. impairments increase), and the centralized $K$-diagnosability as $T$ increases (i.e. the impairments vanish).
Summarizing, this study proposes the first unified framework for distributed fault diagnosis in DES with transmission delay impairments.

The remainder of this paper is organized as follows.
Section \ref{2} describes the partially-observed DES.
Section \ref{3} proposes the key notion of $ K^T\! $-codiagnosability.
In Section \ref{4}, the delay recorder and diagnosis function are proposed to accomplish verification.
Section \ref{8} gives concluding remarks.

\section{Preliminaries}\label{2}

Let us recall the basic formalism of automata, used to model DES. 
Consider the finite event set $E$ in DES as an alphabet, so that the \emph{concatenation} of a string of words in the alphabet can be viewed as a finite sequence of events in $E$. 
A \emph{language} is a set of event strings, formed from the events in $ E $.
Let $E^*$ be the set of all finite strings over $E$. 
Denote the length of a string $ s $ as $|s|$, and let $\epsilon$ be the empty string with $|\epsilon| = 0$. 
%For any language $ L $, the \emph{kleene-closure} of $ L $ is defined as $ L^* = \epsilon \cup L \cup LL \cup \dots $.
The \emph{prefix-closure} of a language $L$ is defined as $\overline{L}=\{s\in E^* | \exists t\in E^*, \text{ s.t. } st\in L\}$, and $L$ is \emph{prefix-closed} if $L=\overline{L}$.
We assume to work with \emph{live} languages $ L $: $\forall s \in L, \exists e\in E $, s.t. $ se\in L$, that is, any string in $ L $ can be extended to arbitrary length.

Automata are a common framework for manipulating languages. 
Let us consider a finite automaton
	\begin{equation}\label{S}
	G=(X, E, \alpha, X_0),
	\end{equation}
	where $X$ is the set of finite states; $E$ is the set of finite events; $ \alpha:X \times E^* \rightarrow 2^X $ is the transition function that describes the transition of an event string; $X_0 \subseteq X $ is the set of possible initial states.
The language generated by $G$ from state $x \in X$ is denoted by $\mathcal{L}(x,G)=\{s \in E^* | \alpha (x,s)! \}$, where $!$ means that the string $ s $ ``is defined'', i.e. it can occur starting from state $x$.
If $x \in X_0$, we simply denote $\alpha(x_0,s)$ as $\alpha(s)$ and $\mathcal{L}(x_0,G)$ as $\mathcal{L}(G)$. 
Given a set of states $ \iota \subseteq X $, we define the set of accessible states of $ \iota $ as $ \mathcal{A}^{G}(\iota) = \{ x'\in X \big| \exists x \in \iota, \exists s \in \mathcal{L}(x,G), \text{ s.t. } x'\in\alpha(x,s) \} $. 

In a partially-observed DES, the event set $ E $ is divided into the observable events $ E_o $ and the unobservable events $ E_{uo} $.
A projection operator $ P_{E_o}: E^* \rightarrow E^*_o $ is used to obtain the observation of an event string:
$\forall s\in \mathcal{L}(G), \forall e\in E: \alpha(se)!, $
\begin{equation}
P_{E_o}(\epsilon)= \epsilon, P_{E_o}(se)=\left\{
\begin{aligned}
& P_{E_o}(s)e, &\text{if} \ e\in E_o; \\
& P_{E_o}(s),  &\text{if} \ e\notin E_o.
\end{aligned}
\right.
\end{equation}
Intuitively, $ P_{E_o}(s) $ shows the observed events for a trajectory $ s\in \mathcal{L}(G) $.
The operator $ P_{E_o} $ can also handle a set of event string, that is, $ \forall S \subseteq \mathcal{L}(G) $, $ P_{E_o}(S)=\{ s\in E_o^*| \exists s' \in S, \text{ s.t. } s=P_{E_o}(s') \} $.
Based on the projection operator $ P_{E_o} $, consider an operation $ \zeta^n_{E_o} $: 
$ \forall s \in \mathcal{L}(G) $,
\begin{equation*}
\zeta^n_{E_o}(s) \!=\! \{ s'' \!\in\! \overline{s} | \exists s' \!\in\! \overline{s}: |s'| \!\geq\! |s|-n, \text{s.t.} P_{E_o}(s'') \!=\! P_{E_o}(s') \}.
\end{equation*}
Intuitively, $ \zeta^n_{E_o}(s) $ collects all prefixes of $ s $ with the same observation as a trajectory $ s' $.

To embed the information of observable events into the system model and determine the indistinguishable states, we make use of the $ M $-machine w.r.t. $ G $ and $ E_o $ as \cite{rudie1995computational,yin2018minimization}
\begin{equation}\label{Mmachine}
\mathcal{M}_{E_o}(G) = (Z, E \cup \{\epsilon\}, \delta, Z_{0}),
\end{equation}
where $ Z \subseteq X \times X $ is the set of states and 
$ Z_0 = X_0 \times X_0 $  is the set of initial states.
For any $ (x_1,x_2) \in Z, e \in E $, the transition function $ \delta: Z \times E \cup \{\epsilon\} \rightarrow 2^Z $ is defined as follows.\\
	$ 1)\ \text{If} \ e\in E_o \wedge \alpha(x_1, e)! \wedge \alpha(x_2, e)!, \text{then}\\
	\hspace*{0.44cm}\delta((x_1,x_2),e) = \{(x_1',x_2')\mid x_1'\in\alpha(x_1,e),x_2'\in\alpha(x_2,e)\}.\\
	2)\ \text{If} \ e\notin E_o \wedge \alpha(x_1, e)!, \text{then}\\
	\hspace*{0.44cm}\delta((x_1,x_2),e) = \{(x_1',x_2)\mid x_1'\in\alpha(x_1,e)\}. \\
	3)\ \text{If} \ e\notin E_o \wedge \alpha(x_2, e)!, \text{then}\\
	\hspace*{0.44cm}\delta((x_1,x_2),\epsilon) = \{(x_1,x_2')\mid x_2'\in\alpha(x_2,e)\}. $\\
Intuitively,  the definitions of $ \alpha $ and $ \delta $ are such that $ \mathcal{L}(G)= \mathcal{L}(\mathcal{M}_{E_o}(G)) $.
% the definitions of $ \alpha $ in $ G $ and $ \delta $ in $ \mathcal{M}_{E_o}(G) $ create a relation between the transitions of $ G $ and $ \mathcal{M}_{E_o}(G) $, that is, $ \mathcal{L}(\mathcal{M}_{E_o}(G))=\mathcal{L}(G) $.
In addition, we have $ \forall s \in \mathcal{L}(G), \delta(s) = \{(x,x') \in Z \mid x \in \alpha(s), x'\in \alpha(s'): P_{E_o}(s')=P_{E_o}(s)\} $, which implies that for any $ (x,x') \in Z $, $ x $ and $ x' $ are indistinguishable with the observation ability $ E_o $. 
In the following, we use the notation $ I_1(x,x') = x $ and $ I_2(x,x') = x' $ to indicate the first and the second state component of $ (x,x') \in Z $.

\subsection{Illustrative example}

To illustrate the key concepts, we present throughout this work a few examples inspired by an air heating unit start-up scenario, cf. Fig. \ref{thermostat}.
At start-up, under healthy conditions, the fan creates an air flow heated by the heating coil. 
The air flow blows the heat away from the coil so that a desired temperature is reached at equilibrium. 
But in some start-up scenarios, the fan may fail to turn on and we need to diagnose the fault to avoid coil overheating. 
The system is monitored by two sensors: a temperature sensor close to the coil, and an air flow sensor at the outlet. 
Denote the event observed by sensor $ 1 $ as $e_1$ (desired temperature is reached) and the event observed by sensor $ 2 $ as $e_2$ (flow rate is regular). 
The fault, obviously unobservable by any sensor, is denoted as $f$. 

\begin{figure}[t]
	\centering
	\subfigure[Air heating unit]
	{\label{thermostat}
		\includegraphics[height=1.28cm]{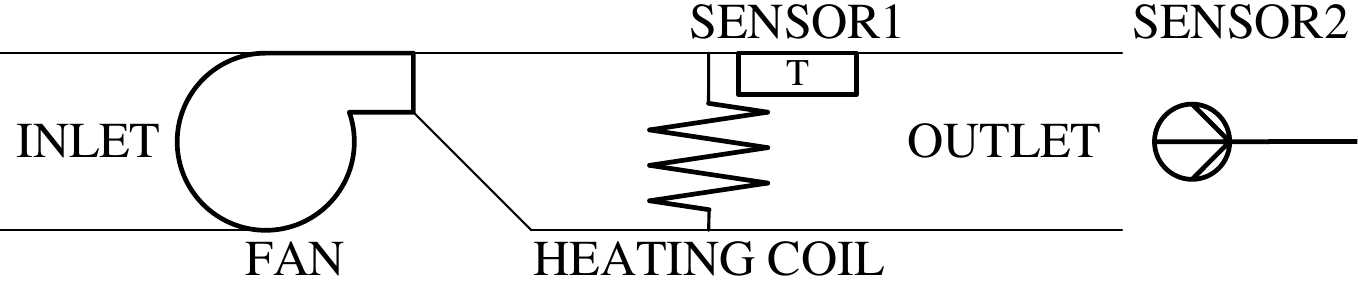}}
	\subfigure[$ G $]
	{\label{System}
		\includegraphics[height=0.9cm]{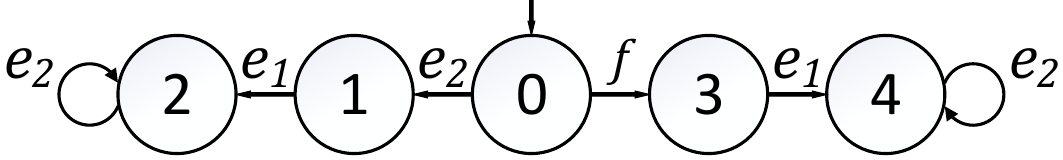}}
	\subfigure[$ \mathcal{M}_{E_o'}(G) $]
	{\label{M}
		\includegraphics[height=2.8cm]{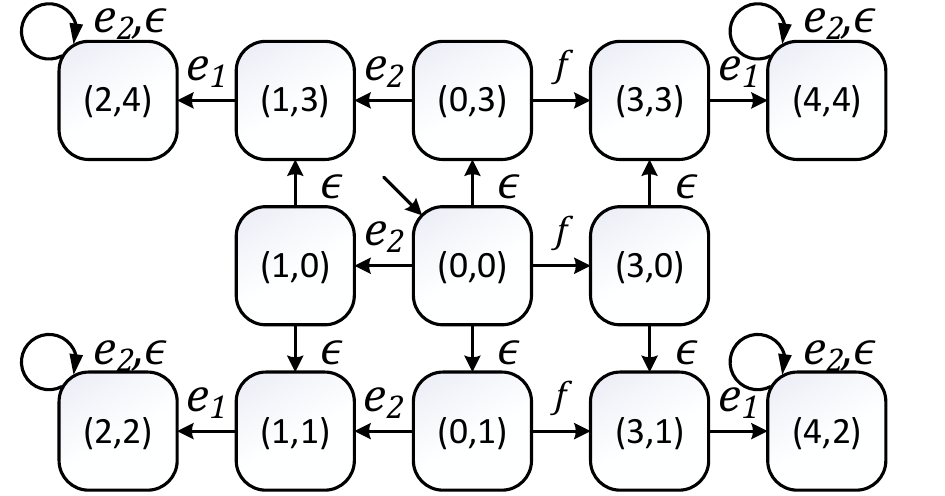}}
	\caption{Air heating unit, the model $ G $ of its start-up process and the $ M $-machine $ \mathcal{M}_{E_o'}(G) $ with $ E_o' = \{e_1\} $}
\end{figure}
	
	\begin{example}\label{ES}(System model).
		We model the start-up of the air heating unit as the automaton $ G $ in Fig. \ref{System}, where the initial state $ x_0 = \{0\} $ means that the system is off initially. 
		The branch on the left of state $ 0 $ represents the healthy functioning: the air flow is regular (in state $ 1 $), so that after some time the desired temperature is reached (in state $ 2 $). 
		The branch starting on the right of state $ 0 $ represents the scenario that the fan does not start, which may be due to an unobservable fault (in state $ 3 $), leading to overheating detected via sensor 1 (in state $ 4 $): however, it is possible that the fan simply did not start timely (e.g. due to blockage in the flow channel, or wear), and after some time, $e_2$ may occur, detected by sensor 2.
		Using the automaton formalism, we have that when the system is in state $ 0 $, the only events that can occur are $ f $ and $ e_2 $, that is, $ \alpha(0,f)! $ and $ \alpha(0,e_2)! $.
		For the string $ fe_1e_2e_2 $ generated by $ G $, let $E_o=\{e_1,e_2\}$.
		Then, $ P_{E_o}(fe_1e_2e_2) = e_1e_2e_2 $, $ \zeta^0_{E_{o}}(fe_1e_2e_2) = \{fe_1e_2e_2\} $ and $ \zeta^3_{E_{o}}(fe_1e_2e_2) = \{\epsilon,f,fe_1,fe_1e_2,fe_1e_2e_2\} $.
		To illustrate the $ M $-machine, consider an observable event set $ E_o' = \{e_1\} $. 
		Then, we have $ \mathcal{M}_{E_o'}(G) = (Z', E \cup \{\epsilon\}, \delta', Z_{0}') $ shown in Fig. \ref{M}.
		For $ fe_1e_2e_2 \in \mathcal{L}(\mathcal{M}_{E_o'}(G)) $, we have $ \delta'(fe_1e_2e_2) = \{(4,4),(4,2)\} $, indicating that states $ 4 $ and $ 2 $ are indistinguishable when we rely only on the observation of $e_1$.
		\hfill \ensuremath{\Box}
	\end{example}

	\section{Distributed Diagnosability}\label{3}
	
	The notion of codiagnosability \cite{debouk2000coordinated} was proposed as the basic property to handle decentralized fault diagnosis, i.e. without information sharing between agents. 
	We provide a distributed extension, called $ K^{T} $-codiagnosability, when information sharing between agents is allowed (possibly subject to transmission impairments).
	Before this, we discuss the distributed observation architecture and ambiguities arising from partial observation and transmission impairments.

	\subsection{Distributed Observation}\label{DO}
	
	Let the system under consideration be monitored by a set of agents $ A = \{a_1, a_2,\dots, a_N\} (N \in \mathbb{N}^+) $ with corresponding events in $ \{E_{1}, E_{2},\dots, E_{N}\} $ such that $ E_o = E_{1} \cup \dots \cup E_{N} $.
	Each agent can share its information with some of the other agents according to a weighted connected graph $ C_A = (V_A, W_A) $ consisting of a set of vertices $ V_A = \{a_1,\dots, a_N\} $ representing the agents, and a set of undirected weighted edges $ W_A \subseteq V_A \times V_A $ representing the transmission links among neighboring agents; the non-negative weight of each edge is related to a transmission delay as specified later.
	Denote the length of the path between two vertices as the sum of the weights along the path.
	%If for any two vertices in $ V_A $ there always exists a path, then $ C_A $ is said to be a connected graph. 
	Then, for any two agents $a_i, a_j$, we define their distance $|a_ia_j|$ as the minimum length between them.

	With the distributed structure above, we now consider a simple communication protocol between agents. 
	Three modules are required for each agent: communication, storage and observation modules, cf. Fig. \ref{agent}, with the following
	\begin{itemize}
		\item \textit{Communication module:} this module forwards $ M_r $ (message received from neighbours) to the storage module, and sends $ M_{new} $ (new message from storage module) and $ M_o $ (message from observation module) to the neighbours. 
		\item \textit{Storage module:} this module stores $ M_o $ from the observation module, and avoids that the occurrence of a certain event is recorded multiple times: in fact, $ M_r $ is stored as $ M_{new} $ only if it is not already in the storage set.
		\item \textit{Observation module:} in this module, a new observed event from sensors receives a timestamp and becomes $ M_o $; the module also performs diagnosis by processing  $ M_o $, $ M_{new} $ with a diagnoser \cite{sampath1995diagnosability} or an observer \cite{cassandras2009introduction}.
		%diagnosis (in the form of, e.g., a diagnoser \cite{sampath1995diagnosability} or an observer \cite{cassandras2009introduction}) uses $ M_o $, $ M_{new} $; the new observed event from sensors receives a timestamp and becomes message $ M_o $. 
	\end{itemize}
	
	\begin{figure}[t]
		\centering
		\includegraphics[height=6.4cm]{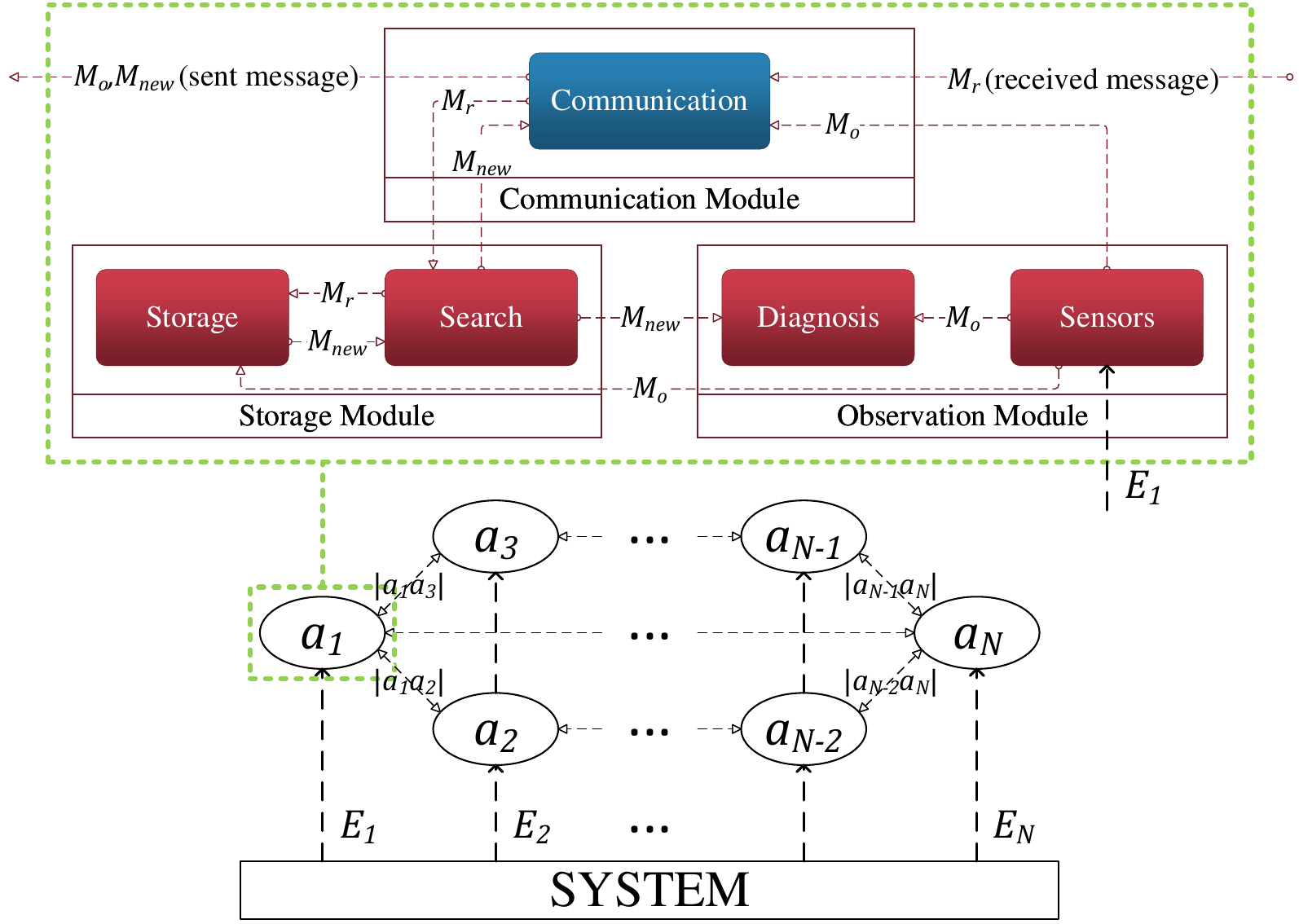}
		\caption{ The distributed observation architecture (lower) and relations between the three modules of each agent (upper) }
		\label{agent}
	\end{figure}
	
	The operations of communication and storage are expected to introduce delays between the observation of an event by one of the agents and the reception of the same event by the other agents.
	Nevertheless, as $ C_A $ is a connected graph, the message that an event is observed (and its timestamp) will reach all other agents, possibly with some delay. 
	For better readability and in line with the literature (e.g.,\cite{debouk2000coordinated,yin2015codiagnosability,viana2019codiagnosabilitya,nunes2018codiagnosability,viana2021codiagnosability}), we will simply analyze two agents $ a_i(i \in \{1,2\}) $.
	All the results in this work can be extended to more agents, at the price that more cases should be analyzed.
	
	Note that the messages processed by each agent $ a_i (i\in\{1,2\})$ have two sources: $ M_o $ from the observation module and $ M_{new} $ from the storage module.
	Thus, the set of all observable events $E_o$ can be partitioned into $E_i$ and $E_o\backslash E_i$, where the occurrence of the events in $E_i$ is received with no delay, while the occurrence of the events in $E_o\backslash E_i$ is received from the storage module with some delay.
	
	%To avoid a trivial scenario, we consider that the time of the occurrence of next events is always unknown.
	We now introduce a coefficient $ T \!>\! 0 $ related to transmission efficiency, where $ T\!=\!1 $ indicates a nominal efficiency and $ T\!<\!1 $ indicates that the efficiency degrades.
	The distance between agents is $ |a_1a_2| \geq 0 $ and we represent the transmission delays as follows: 
	if another agent $a_k$ ($k \!\neq\! i$) observes an event $ e \!\notin\! E_{i} $, $ a_i $ will receive the observation $e$ with a delay of no more than $\lceil \frac{|a_1a_2|}{T} \rceil$ steps ($ \lceil \cdot \rceil $ rounds the element to the nearest integer towards infinity).
	Note that $ T\!\ll\! 1 $ degrades to a decentralized setting, whereas $ T\!\gg\! 1 $ converges to a centralized setting where each agent can monitor \emph{all} observable events with delay of no more than one step.
	
	\subsection{$ K^{T} $-codiagnosability}\label{SKTCD}
	
	%In this subsection, we begin fault analysis in the distributed setting. 
	As a starting point for fault analysis in the distributed	setting, we recall state-of-the-art notions for centralized and decentralized fault diagnosis.
	
	Let $G=(X, E, \alpha, x_0)$ be the system model and $ f $ be the fault events we intend to diagnose. 
	Let $K$ be the maximum number of steps allowed from the occurrence of a fault to its diagnosis.
	To diagnose the faults within $K$ steps, a structure of step counter $ \Delta: \mathcal{L}(G) \to \{-1, 0, 1, \dots, K\} $ is used to count the number of steps in an event string after a fault occurs: $ \forall s \in \mathcal{L}(G), \forall e\in E: \alpha(se)! \Rightarrow \Delta(\epsilon)=-1, \Delta(se)= $
	\begin{align}\label{Delta}
	\left\{
	\begin{aligned}
	& \Delta(s), &\text{if}\ & [\Delta(s) \!=\! -1 \wedge e \!\neq\! f ]\vee [\Delta(s) \!=\! K]; \\
	& \Delta(s) \!+\! 1, &\text{if}\ &  [\Delta(s) \!=\! -1 \wedge e \!=\! f ]\vee [0 \!\leq\! \Delta(s) \!<\! K];
	\end{aligned}
	\right.
	\end{align}
	where $ -1 $ means no fault happens.
	By means of $ \Delta $, the literature has introduced the notions of $K$-diagnosability and $K$-codiagnosability:
	\begin{definition}\label{KD}
		($ K $-diagnosability\cite{cassez2008fault})  
		For $ K \in \mathbb{N} $, the live language $ \mathcal{L}(G) $ is $ K $-diagnosable w.r.t. $ f $ if $ \forall s \in \mathcal{L}(G): \Delta(s) = K $,
		\begin{equation}\label{KDC}
		\forall s'\in \mathcal{L}(G): P_{E_o}(s')=P_{E_o}(s) , \Delta(s') \neq -1.
		\end{equation}
	\end{definition}
\vspace{0.6em}
	\begin{definition}\label{KCD}
		($ K $-codiagnosability\cite{cassez2012complexity})  
		For $ K \in \mathbb{N} $, the live language $ \mathcal{L}(G) $ is $ K $-codiagnosable w.r.t. $ f $ if $ \forall s \in \mathcal{L}(G): \Delta(s) = K, $ 
		\begin{equation}\label{KCDC}
		\exists i \!\in\! \{1,2\}, \text{s.t.} \forall s'\!\in\! \mathcal{L}(G)\!:\! P_{E_i}(s')\!=\!P_{E_i}(s) , \Delta(s') \!\neq\! -1. \ \!\!\!\!\!\!
		\end{equation}
	\end{definition}
	\vspace{0.6em}
	
	Obviously, $K$-diagnosability is a centralized notion as a single monitors all observable events.
	In $ K $-codiagnosability, $ f $ can be diagnosed by either $ a_1 $ or $ a_2 $ unambiguously within $ K $ steps, without any communication between agents. 
	Unfortunately, the following example shows that some faults may go undetected in the absence of communication.
	
	\begin{example}\label{ELO}(Limits of $ K $-codiagnosability).
		For the system in Fig. \ref{System}, we consider $ a_1 $ with observation ability $ E_{1} = \{e_1\} $, and $ a_2 $ with observation ability $ E_{2} = \{e_2\} $. 
		Since $ P_{E_{1}}(fe_1e_2e_2)=P_{E_{1}}(e_2e_1e_2)=e_1 $ and $ P_{E_{2}}(fe_1e_2e_2)=P_{E_{2}}(e_2e_1e_2)=e_2e_2 $, i.e., faulty and healthy strings are indistinguishable, it is impossible for $ a_1 $ or $ a_2 $ to determine within $K=3$ steps if the fault $ f $ has occurred or not. 
		We conclude that, when $K=3$, $ \mathcal{L}(G) $ is not $ K $-codiagnosable w.r.t. $ f $.
		\hfill \ensuremath{\Box}
	\end{example}
	
	Intuitively, a fault that goes undetected in the absence of communication may become detectable if communication among agents is allowed (cf. Example \ref{EKTCD}). 
	This means that $ K $-codiagnosability is restrictive and an appropriate extension is required, which is the key definition in this paper:
	\begin{definition}\label{KTCD}
		($ K^{T} $-codiagnosability) For $ K \in \mathbb{N} $ and $ T > 0 $, the live language $ \mathcal{L}(G) $ is $ K^{T} $-codiagnosable w.r.t. $ f $ if $ \forall s \in \mathcal{L}(G): \Delta(s) = K, $
		\begin{align}\label{KTCDC}
		& \exists i \in \{1,2\}, \text{ s.t. } \forall s'\in \mathcal{L}(G): P_{E_i}(s') = P_{E_i}(s)\ \wedge    \nonumber \\
		& P_{E_o}(\zeta_{E_o \backslash E_i}^{\lceil \frac{|a_1a_2|}{T} \rceil}(s')) \cap P_{E_o}(\zeta_{E_o \backslash E_i}^{\lceil \frac{|a_1a_2|}{T} \rceil}(s)) \!\neq\! \emptyset, \Delta(s') \!\neq\! -1.
		\end{align}
	\end{definition}
	\vspace{0.6em}
	
	Intuitively, if (\ref{KTCDC}) holds, then $ a_i $ can timely observe or receive all key events to determine the occurrence of $ f $.
	In other words, $ a_i $ is capable of consistently distinguishing a fault string ($ s $ satisfying $ \Delta(s) = K $) from a normal string ($ s $ satisfying $ \Delta(s) = -1 $), despite the imperfect observation caused by delay. 
	As $ T $ increases, the strings that cannot be distinguished from the string $ s: \Delta(s) = K $ become less and less, that is, (\ref{KTCDC}) gets weaker and weaker.
	As expected, $ K^{T\!} $-codiagnosability $ \Rightarrow $ $ K^{T'}\! $-codiagnosability when $ T \leq T' $ (higher transmission efficiency improves diagnosis ability).
	
	\begin{example}\label{EKTCD}($ K^{T} $-codiagnosability).
		Consider the same system and agents as Example \ref{ELO}.
		Suppose $ |a_1a_2|=2 $, $ T'=2 $, then $ a_1 $ will receive the occurrence of $ e_2 $ in no more than $ \lceil \frac{|a_1a_2|}{T'} \rceil=1 $ step.
		When $ e_2e_1 $ occurs, the occurrence of $ e_2 $ will be received by $a_1$ before $ e_1 $, but no $ e_2 $ will be received by $a_1$ before the observation of $ e_1 $ when $ fe_1 $ occurs.
		This implies that $ e_2e_1 $ and $ fe_1 $ are distinguishable, i.e., the fault can be diagnosed by $ a_1 $.
		Indeed, Definition \ref{KTCD} gives $ P_{E_1}(fe_1)=e_1 $, $ P_{E_o}(\zeta_{E_2}^{1}(fe_1))=\{\epsilon\} $, and each string $ s' \in \{s|P_{E_1}(s)=e_1 \wedge P_{E_o}(\zeta_{E_2}^{1}(s))\cap\{\epsilon\} \neq \emptyset\} = \{fe_1,fe_1e_2\} $ satisfies $ \Delta(s') \geq 0 $.
		We conclude that $ \mathcal{L}(G) $ is $ K^{T'} $-codiagnosable w.r.t. $ f $ when $ T'=2 $ and $ K\geq1 $.
		Next, suppose $ T=1 $, so that $ a_1 $ will receive the occurrence of $ e_2 $ in no more than $ \lceil \frac{|a_1a_2|}{T} \rceil=2 $ steps.
		In this case, we know that $ fe_1e_2e_2 $ and $ e_2e_1 $ are indistinguishable since $ e_2 $ may not be received before $ e_1 $.
		Correspondingly, the string set $ \{s\mid P_{E_1}(s)=P_{E_1}(fe_1e_2e_2) \wedge P_{E_o}(\zeta_{E_2}^{2}(s))\cap P_{E_o}(\zeta_{E_2}^{2}(fe_1e_2e_2)) \neq \emptyset\} = \{e_2e_1,fe_1e_2,fe_1e_2e_2,\dots\} $ and $ \Delta(e_2e_1)=-1 $.
		That is, the fault may not be diagnosed by $ a_1 $ when $ T=1 $ and $ K = 3 $.
		Nevertheless, Example \ref{EKT} will show that $ K^{T} $-codiagnosability is satisfied when $ T=1 $ and $ K = 3 $, as the fault can be diagnosed by $ a_2 $.
		\hfill \ensuremath{\Box}
	\end{example}

	\begin{remark}\label{rr} 
		(Relations between $ K $-codiagnosability, $ K^{T} $-codiagnosability and $ K $-diagnosability).
		Obviously, (\ref{KCDC}) $ \Rightarrow $ (\ref{KTCDC}), that is, if $ K $-codiagnosability holds, then $ K^T\! $-codiagnosability holds for any $ T $.
		From (\ref{KTCDC}) and the definition of $ \zeta $, we have $ P_{E_o}(s') \!=\! P_{E_o}(s) \Leftrightarrow P_{E_o}(\zeta_{{E_o \backslash E_i}}^{0}(s')) \!=\! P_{E_o}(\zeta_{{E_o \backslash E_i}}^{0}(s)) \Rightarrow P_{E_i}(s') \!=\! P_{E_i}(s) \wedge P_{E_o}(\zeta_{E_o \backslash E_i}^{\lceil \frac{|a_1a_2|}{T} \rceil}(s')) \!\cap\! P_{E_o}(\zeta_{E_o \backslash E_i}^{\lceil \frac{|a_1a_2|}{T} \rceil}(s)) \!\neq\! \emptyset $ for any $ T $.
		We obtain that (\ref{KTCDC}) $ \Rightarrow $ (\ref{KDC}), that is, if $ K^T\! $-codiagnosability holds for any $ T $, then $ K $-diagnosability holds.
		Hence, we conclude:\\
		$ K\!\text{-codiagnosability} \;\!\!\!\Rightarrow\;\!\!\! K^T\;\;\!\!\!\!\!\text{-codiagnosability} \;\!\!\!\Rightarrow\;\!\!\! K\!\text{-diagnosability}. $
	\end{remark}

\section{Verification of Distributed Diagnosability}\label{4}

In general, it is impossible to determine if a fault can be diagnosed by analysing each event string as in Example \ref{EKTCD}.
It is necessary to embed the delay information into the automaton and develop a feasible method to verify $ K^T\! $-codiagnosability.
This is done by linking the system states to fault events and by building a delay recorder structure to handle the delays.

\subsection{Delay Recorder}\label{SDR}

In diagnosis, it is crucial to observe the events that help to distinguish the faulty from the healthy strings.
A delay recorder aims to register the delays of these events correctly.
%In fact, although any information received from the other agents comes with a delay, 
Note that recording all the delays can be deleterious for verification, which will be more clear in Example \ref{EI}.

Motivated by the step counter $ \Delta $ in (\ref{Delta}), a structure of fault automaton is constructed from \eqref{S} to count the number of steps after a fault happens \cite{yin2015codiagnosability}
\begin{equation}\label{FA}
\hat{G}=(\hat{X}, E, \hat{\alpha}, \hat{X}_{0}),
\end{equation}
where $ \hat{X} = X\times \{-1, 0, 1, \dots, K\} $ includes the state in $ X $ and the fault counting component, i.e. $ \hat{x} = (x,|\hat{x}|_f) \in \hat{X} $ where $ |\hat{x}|_f $ indicates the number of steps after a fault occurs, as calculated in \cite{yin2015codiagnosability}. 
The transition function $\hat{\alpha}: \hat{X} \times E \to 2^{\hat{X}} $ is defined as: 
for any $ \hat{x}=(x,|\hat{x}|_f)\in \hat{X} $ and $ e\in E $ satisfying $ \alpha(x,e)! $, we have $\hat{\alpha}((x,|\hat{x}|_f),e) \!=\! \{(x',|\hat{x}|_f+v) | x' \!\in\! \alpha(x,e)\},$ where $ |\hat{x}|_f \in \{-1, 0, 1, \dots, K\} $ and $ v $ is defined by

$ \
v = \left\{
\begin{aligned}
& 0 ,\quad \text{if} \ [|\hat{x}|_f = -1 \wedge e \neq f ]\vee [|\hat{x}|_f = K];\\
& 1 ,\quad \text{if} \ [|\hat{x}|_f = -1 \wedge e = f ]\vee [0 \leq |\hat{x}|_f < K].
\end{aligned}
\right.
$\\
The set of initial states is $ \hat{X}_{0} =\{(x_0, -1)\mid x_0\in X_0 \}$.
Obviously, $ \mathcal{L}(\hat{G}) = \mathcal{L}(G) $.
	Let us consider the $ M $-machine w.r.t. $ \hat{G} $ and $ E_o $, that is, $ \mathcal{M}_{E_o}(\hat{G})=(Z, E \cup \{\epsilon\}, \delta, Z_{0}) $. 
	For each state $ z \in Z \subseteq \hat{X} \times \hat{X} $, we have that $ I_1(z),I_2(z)\in \hat{X} $: thus, we can use $ |I_1(z)|_f $ and $ |I_2(z)|_f $ to represent the fault counting value.
	We denote the ``confusing state'' subset as
	\begin{equation}\label{key}
	Z^C=\{z:|I_1(z)|_f = K \wedge |I_2(z)|_f = -1\} \subseteq Z.
	\end{equation}

We are now in the position to explain how to handle delays.
For agent $ a_i (i\in\{1,2\})$, the delay recorder to determine the delays to be recorded is defined as an automaton
\begin{equation}\label{DR}
\mathcal{R}_i = (X_i, E, \alpha_i, X_{i0}),
\end{equation}
where each state $ x_i=(\hat{x}, (|x_i|^{j_i}_i)^{j_i}) \in X_i \subseteq \hat{X} \times (\{0, 1, \dots, \lceil \frac{|a_1a_2|}{T} \rceil, \infty\})^{j_i} (j_i\in\{1,\dots,n_i\}) $ contains 2 components: the state in $ \hat{X} $, and the delay value, denoted by $ |x_i|_i^{j_i} $.
Here, $ n_i $ is the number of the delay value we need to record.
The transition function $ \alpha_i: X_i \times E \to 2^{X_i} $ and the set of initial states $ X_{i0} $ are built as in Algorithm \ref{AOS}.

\makeatletter
\def\BState{\State\hskip-\ALG@thistlm}
\makeatother

\begin{algorithm} [htbp]
	\begin{spacing}{0.95}
	\begin{algorithmic}[1]
		\caption{The construction of the delay recorder $ \mathcal{R}_i $} \label{AOS}
		\Require
		$ \hat{G} = (\hat{X}, E, \hat{\alpha}, \hat{X}_{0}), \lceil \frac{|a_1a_2|}{T} \rceil, Z^C_{E_i}, E_o, E_i (i \in \{1,2\}) $;
		\Ensure
		$ \mathcal{R}_i = (X_i, E, \alpha_i, X_{i0}) $;
		
		\State $ T^{ini}, T_i^1, X_i^1, Y_i^1, X_{i0}, X_{i} \gets \emptyset $; $ m \gets 1 $;
		\State \texttt{FORWARD1}$ (\hat{X}_{0}, X_i^1, Y_i^1) $; \texttt{FORWARD2}$ (Y_i^1) $; $ n \gets m $;
		\For {$ l \in \{1,\dots,n\} $}
		\State $ I \!\gets\! \bigcup_{y\in Y_i^{l\;\!\!\;\!\!-\;\!\!\;\!\!n\;\!\!\;\!\!+\;\!\!\;\!\!m}} \{\{\hat{x}' \!\in\! \hat{X} | \exists \hat{x} \!\in\! \hat{X} \!:\! (\hat{\alpha}(\hat{x},e)\!=\!\hat{x}') \!\in\! y\}\} $;
		\If {$ \forall (\hat{x}^k,\hat{x}^{-1})\!\in\! Z_{E_i}^C, [\forall \iota,\iota' \!\in\! I \!:\! \iota \!\neq\! \iota', \hat{x}^k \!\notin\! \mathcal{A}^{\hat{G}}(\iota), $ \hspace*{0.42cm} $ \hat{x}^{-1} \!\notin\! \mathcal{A}^{\hat{G}}(\iota')] \wedge [\hat{x}^k \!\notin\! X_i^{l-n+m} \vee \hat{x}^{-1} \!\notin\! X_i^{l-n+m}]  $}
		\State $ T_i^{l-n+m} \!\gets\! T_i^{m} $; $ X_i^{l-n+m} \!\gets\! X_i^{m} $; $ Y_i^{l-n+m} \!\gets\! Y_i^{m} $;
		\State $ m \!\gets\! m-1 $;
		\EndIf
		\EndFor
		\State $ n \gets m $; $ n' \gets 1 $; $ n_i \gets 1 $;
		\For {$ j \in \{2,3,\dots,n\} $}
		\If {$ \forall l \in \{1,\dots,n_i\}, \exists y \in Y_i^j, $s.t.$ \forall y' \in Y_i^{l}, y \not\subseteq y' $}
		\For {$ l' \in \{0,\dots,n'\!-1\} $}
		\If {$ \forall y \in Y_i^{n'\!-l'} $, $ \exists y' \in Y_i^{j} $, s.t. $ y \subseteq y' $}
		\State $ T_{i}^{n'\!-l'} \!\!\gets\! T_{i}^{n_i} $; $ Y_{i}^{n'\!-l'} \!\!\gets\! Y_{i}^{n_i} $; $ n_i \!\gets\! n_i\!-\!1 $; 
		\EndIf
		\EndFor
		\State $ n_i \gets n_i+1 $; $ n' \gets n_i $; $ T_i^{n_i} \gets T_i^{j} $; $ Y_i^{n_i} \gets Y_i^{j} $;
		\EndIf
		\EndFor
		\For {$ \hat{x}_0\in\hat{X}_0 $}
		\State $ X_{i0} \gets X_{i0} \cup \{(\hat{x}_{0},(\infty)^{1})\} $; $ X_{i} \gets X_{i} \cup \{(\hat{x}_{0},(\infty)^{1})\} $;
		\State \texttt{RECORD}($ (\hat{x}_{0},(\infty)^{1}) $,$ \mathcal{R}_i $); 
		\For {$ j \in \{2,\dots,n_i\} $}
		\State $ X_{i0} \!\gets\! X_{i0} \!\cup\! \{(\hat{x}_{0},(0)^{j})\} $; $ X_{i} \!\gets\! X_{i} \!\cup\! \{(\hat{x}_{0},(0)^{j})\} $;
		\State \texttt{RECORD}($ (\hat{x}_{0},(0)^j) $,$ \mathcal{R}_i $); 
		\EndFor
		\EndFor
		\State \textbf{Return} $ \mathcal{R}_i $;
		\vspace{0.19em}
		\Procedure{\texttt{FORWARD1}}{$ \iota, X_i^j, Y_i^j $}
		\State $ X^{tem} \!\gets\! \{\hat{x} \!\in\! \hat{X} | \exists \hat{x}' \!\in\! \iota, s \!\in\! (E\backslash E_o)^*,\text{s.t.} \!\ \hat{x} \!\in\! \hat{\alpha}(\hat{x}',s)\} $; 
		\If {$ X^{tem} \not\subseteq X_i^j $}
		\State $ X_i^j \gets X_i^j \cup X^{tem} $;
		\For {$ e \in E_o: \exists \hat{x} \in X^{tem}, \text{ s.t. } \hat{\alpha}(\hat{x},e)! $}
		\If {$ e \in E_i $}
		\State \texttt{FORWARD1}$ (\{\hat{x} \in X_i | \exists \hat{x}' \in X^{tem}, $ s.t. $ \hat{x} \in \hspace*{2.01cm} \hat{\alpha}(\hat{x}',e)\}, X_{i}^{j}, Y_{i}^{j}) $; 
		\ElsIf {$ e \in E_o\backslash E_i $}
		\State $ Y_{i}^{j} \gets Y_{i}^{j} \cup \{\{\hat{\alpha}(\hat{x}',e)=\hat{x} | \exists \hat{x}' \in X^{tem}, $ \hspace*{2.01cm} $ \hat{x} \in \hat{X}, \text{ s.t. } \hat{x} \in  \hat{\alpha}(\hat{x}',e)\}\} $;
		\EndIf 
		\EndFor
		\EndIf
		\EndProcedure
		\Procedure{\texttt{FORWARD2}}{$ Y_i^j $}
		\For {$ y \in Y_i^{j} $}
		\State $ X' \gets \{\hat{x}' \in \hat{X} | \exists \hat{x} \in \hat{X}: (\hat{\alpha}(\hat{x},e)=\hat{x}') \in y\} $;
		\If {$ [\exists (\hat{x}^k,\hat{x}^{-1})\in Z_{E_i}^C, \text{ s.t. } \hat{x}^k,\hat{x}^{-1} \in \mathcal{A}^{\hat{G}}(X')] \wedge \hspace*{0.96cm} [y\notin T^{ini}] $}
		\State $ T^{ini} \gets T^{ini} \cup y $; $ m \gets m+1 $; 
		\State $ T_{i}^{m} \gets y $; $ X_{i}^{m} \gets \emptyset $; $ Y_{i}^{m} \gets \emptyset $;
		\State \texttt{FORWARD1}$ (X', X_i^{m}, Y_i^{m}) $; \texttt{FORWARD2}$ (Y_i^{m}) $;
		\EndIf
		\EndFor
		\EndProcedure
	\end{algorithmic}
	\end{spacing}
\end{algorithm}

	\begin{algorithm} [htbp]
		\begin{spacing}{0.95}
		\begin{algorithmic}[1]
			\caption{The \texttt{RECORD} procedure in Algorithm \ref{AOS}} \label{AP}
			\Procedure{\texttt{RECORD}}{$ (\hat{x},(u)^j),\mathcal{R}_i $}
			\For {$ e \in E: \hat{\alpha}(\hat{x},e)! $}
			\For {$ \hat{x}'\in \hat{\alpha}(\hat{x},e) $}
			\If {$ (\hat{\alpha}(\hat{x},e)=\hat{x}') \in T_i^j $}
			\State $ u' \gets \infty $;
			\ElsIf {$ u=\infty \wedge e \in E_o\backslash E_i $}
			\State $ u' \gets \lceil \frac{|a_1a_2|}{T} \rceil $;
			\ElsIf {$ u>0 $}
			\State $ u' \gets u-1 $;
			\ElsIf {$ u=0 $}
			\State $ u' \gets 0 $;
			\EndIf
			\State Add $ \alpha_i((\hat{x},(u)^j),e) = (\hat{x}',(u')^j) $ to $ \mathcal{R}_i $;
			\If {$ (\hat{x}',(u')^j) \notin X_i $}
			\State $ X_i \gets X_i \cup \{(\hat{x}',(u')^j)\} $;
			\State \texttt{RECORD}($ (\hat{x}',(u')^j), \mathcal{R}_i $);
			\EndIf
			\EndFor
			\EndFor
			\EndProcedure
		\end{algorithmic}
		\end{spacing}
	\end{algorithm}

	Algorithm \ref{AOS} consists of two parts: the first part (lines 1-20) marks $ n_i $ sets of transitions, utilized to build the delay recorders with the \texttt{RECORD} procedure in the second part (lines 21-29).
	All the procedures are listed in Algorithm \ref{AP}.
	The procedure \texttt{FORWARD1}($ \iota, Y_i^j $) collects the transition sets for $ Y_i^j $, that, only containing the events in $ E_o \backslash E_i $, cannot be distinguished under the observation ability $ E_o $.
	The procedure \texttt{FORWARD2}($ Y_i^j $) explores new transition sets; lines 3-20 check which transition set in $ Y_i^j $ is necessary to be marked.
	The procedure \texttt{RECORD} 
	%builds the delay recorder with each set of transitions marked in the first part of Algorithm \ref{AOS}.
	%To be specific, we 
	appends the delay $ \infty $ to the subsequent states of the marked transitions, and records the delay of the transitions that ``leave'' these states marked by $ \infty $.
	
	As $\mathcal{R}_i$ is built from $ \hat{G} $, we have $ \mathcal{L}(\mathcal{R}_i) = \mathcal{L}(\hat{G}) $.
	Note that $\mathcal{R}_i$ records the delay of the events in $ E_o\backslash E_i $ that help to distinguish the pair of system trajectories $ s $ and $ s' $ satisfying $ P_{E_i}(s)=P_{E_i}(s') $ and $ \Delta(s)=K, \Delta(s')=-1 $. 
	This is needed to verify $ K^T\! $-codiagnosability, as it will be clear in the next section.
	
	\begin{example}\label{EI}(The importance of a delay recorder).
		For the system $ G $ in Fig. \ref{System}, the corresponding fault automaton $ \hat{G} $ is shown in Fig. \ref{FCA}.
		With the observable event set $ E_1 = \{e_1\} $, we obtain the $ M $-machine $ \mathcal{M}_{E_1}(\hat{G}) $ and the confusing state subset $ Z_{E_1}^C = \{((4,3),(2,-1))\} $.
		Next, using $ \hat{G} $, $ |a_1a_2|=2 $, $ T'=2 $, $ Z_{E_1}^C $,$ E_o $ and $ E_1 $, we run Algorithm \ref{AOS} to obtain the delay recorder $ \mathcal{R}_1' $ shown in Fig. \ref{R1'}.
		From $ \mathcal{R}_1' $, one can see that only the delay of $ \mathbf{e_2} $ (denoted with bold) in $ fe_1\mathbf{e_2}e_2 $ and $ \mathbf{e_2}e_1e_2 $ are recorded.
		The delay value ``$ 0 $" in $ ((4,3),(0)^1) $ and $ ((2,-1),(0)^1) $ indicates that the occurrence of these $ \mathbf{e_2} $ that help to distinguish $ fe_1e_2e_2 $ and $ e_2e_1e_2 $ have been received, which implies that the fault can be diagnosed by $ a_1 $, as shown in Example \ref{EKTCD} with $ T'=2 $.
		Nevertheless, without a delay recorder, a naive strategy could be to record each delay of $\mathbf{e_2}$, that is, $ fe_1\mathbf{e_2}\mathbf{e_2} $ and $ \mathbf{e_2}e_1\mathbf{e_2} $.
		Unfortunately, by doing this, one would obtain $ ((4,3),(1)^1) $ and $ ((2,-1),(1)^1) $, where the delay value ``$ 1 $" implies that we cannot determine if $ e_2 $ has been received by $ a_1 $, leading to a trouble for verification.
		\hfill \ensuremath{\Box}
	\end{example}
	
	\begin{figure}[htbp]
		\centering
		\subfigure[$ \hat{G} $]
		{\label{FCA}
			\includegraphics[height=0.9cm]{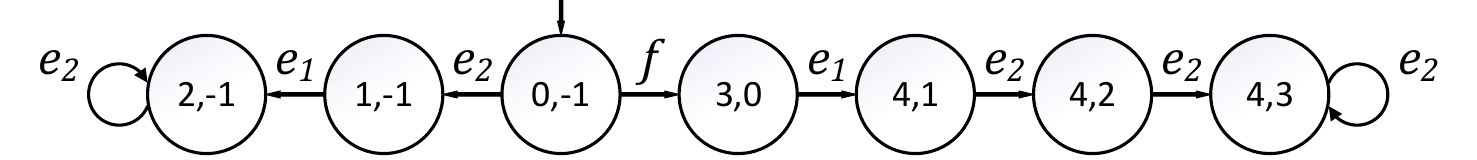}}
		\subfigure[$ \mathcal{R}_{1}' $]
		{\label{R1'}
			\includegraphics[height=0.9cm]{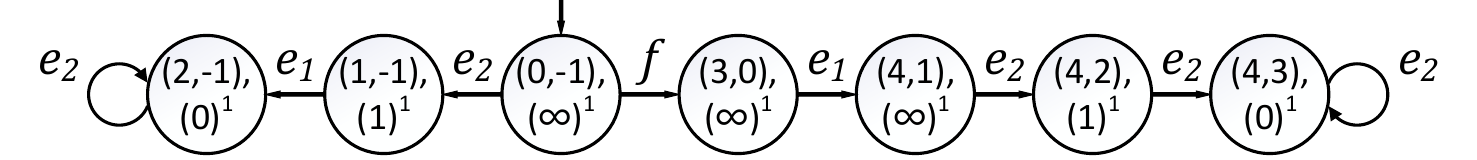}}
		\caption{Fault automaton $ \hat{G} $ and delay recorder $ \mathcal{R}_{1}' $}
	\end{figure}
	
	\begin{remark} (Complexity analysis).
		Like existing diagnosis algorithms, Algorithm \ref{AOS} relies on the construction of observers (or diagnosers).
		This operation is known from the literature having worst-case exponential complexity $ O(2^{2n}) $ \cite{cassandras2009introduction}, where $ n $ is the number of states. 
		However, this worst-case complexity is rarely reached, and recent studies have shown that, for deterministic automata, the average state size of diagnosers is $ O(n^{0.77\log k+0.63}) $ \cite{clavijo2017empirical}, where $ k $ is the number of events.
	\end{remark}
	
	\subsection{Verification of $ K^T\! $-codiagnosability}\label{SV}
	
	Using all structures introduced before, we now initiate the verification process of $ K^T\! $-codiagnosability.
	
	Let $ \hat{G} $ in (\ref{FA}) be the fault automaton built from $ G $ in (\ref{S}).
	We first consider the delay value of $ a_i $ ($ i\in\{1,2\} $).
	To run Algorithm \ref{AOS}, we built $ \mathcal{M}_{E_{i}}(\hat{G}) $ to get the state subset $ Z^C_{E_i} $, and then the delay recorder $ \mathcal{R}_i $ in (\ref{DR}) is obtained.
	Aiming to determine the fault states that $ a_i $ cannot diagnose even with the received message, we further build the $ M $-machine $ \mathcal{M}_{E_{i}}(\mathcal{R}_i) =(Z_i, E \cup \{\epsilon\}, \delta_i, Z_{i0}) $.
	As $ \mathcal{M}_{E_{i}}(\mathcal{R}_i) $ only contains the delay information of $ a_i $, we need to run Algorithm \ref{AOS} again with $ E_k $ ($ k\in\{1,2\}, k\neq i $) to obtain the delay information of $ a_k $.
	Considering that the input of Algorithm \ref{AOS} should be a fault automaton, we need reconstruct $ \mathcal{M}_{E_{i}}(\mathcal{R}_i) $ to be a fault automaton with the delay information of $ a_i $.
	To this end, we remove the second component of each state in $ Z_i $ and the empty event $ \epsilon $ in event set $ E $, constructing the automaton 
	\begin{equation}\label{hat}
	\hat{\mathcal{R}}_i=(\hat{X}_i, E, \hat{\alpha}_i, \hat{X}_{i0}).
	\end{equation}
	The state $ \hat{X}_i = \hat{X} \times (\{0, 1, \dots, \lceil \frac{|a_1a_2|}{T} \rceil, \infty\})^{j_i} \times \{H,F\} $ ($ j_i \in \{1,\dots,n_i\} $), where ``$ H $'' means ``healthy'', ``$ F $'' means ``faulty''.
	For each $ z_i \in Z_i $ and the corresponding $ \hat{x}_i = ((x,|\hat{x}_i|_f),(|\hat{x}_i|_i^{j_i})^{j_i},|\hat{x}_i|_d) \in \hat{X}_i $, denote $ |\hat{x}_i|_f = |I_1(z_i)|_f $, $ |\hat{x}_i|_i^{j_i} = |I_1(z_i)|_i^{j_i} $ and
	\begin{equation}\label{D}
	|\hat{x}_i|_d = \left\{
	\begin{aligned}
	F , & \quad \text{if } z_i \in \mathcal{VC}_i;\\
	H , & \quad \text{otherwise};
	\end{aligned}
	\right.
	\end{equation}
	where we define the condition $ z_i \in \mathcal{VC}_i $ as:
	\begin{equation}\label{vci}
	\!\!\!\!\!\!\!\!\!\!\!\!\!\!\!\!\!\!\! |I_1(z_i)|_f \!=\! K, |I_2(z_i)|_f \!=\! -1, |I_1(z_i)|_i^{j_i} \!>\! 0, |I_2(z_i)|_i^{j_i} \!>\! 0. \!\!\!\!\!\!\!\!\!\!
	\end{equation}
	Clearly, $ \hat{\mathcal{R}}_i $ can be seen as a fault automaton: 
	in fact, each state $ \hat{x}_i \in \hat{X}_i $ has a fault counting value, and $ |\hat{x}_i|_d = F $ can be regarded as the fault states $ \hat{x} \in \hat{X} $ satisfying $ |\hat{x}|_f=K $ in the automaton $ \hat{G} $ to determine the confusing state subset $ Z_{E_k}^C $.

	\begin{example}\label{EM}(The  reconstructed automaton).
		For the fault automaton $ \hat{G} $ and the state subset $ Z_{E_1}^C $ in Example \ref{EI}, we run Algorithm \ref{AOS} with $ T \!=\! 1 $ to obtain $ \mathcal{R}_1 $ shown in Fig. \ref{R2}.
		The crucial difference between $ \mathcal{R}_1 $ and $ \hat{\mathcal{R}}_1 $ (shown in Fig. \ref{DR2}) is the third component $ \{H,F\} $.
		For compactness, Fig. \ref{RM2} shows the $ M $-machine $ \mathcal{M}_{E_{1}}(\mathcal{R}_1) \!=\! (Z_1, E \cup \{\epsilon\}, \delta_1, Z_{10}) $ after omitting the states $ z_1 \in Z_1 $ satisfying $ \mathcal{A}^{\mathcal{R}_1}(I_1(z_1)) \cap \{z_1' \in Z_1 | |I_1(z_1')|_f\!=\!K\} \!=\! \emptyset\ \vee $ $ |I_2(z_i)|_f\!>\!0 $
		(according to (\ref{vci}), all states $ \hat{x}_1 \in \hat{X}_1 $ corresponding to the omitted states in Fig. \ref{RM2} must satisfy $ |\hat{x}_1|_d\!=\!H $). 
		For the string $ fe_1e_2e_2 \in \mathcal{L}(\mathcal{M}_{E_{1}}(\mathcal{R}_1)) $, we have $ \delta_1(fe_1e_2e_2) \!=\! \{(((4,3),(1)^1),((2,-1),(0)^1)),$ $(((4,3),(1)^1),((2,-1),(1)^1)),\dots\} $, which corresponds to $ \hat{\alpha}_1(fe_1e_2e_2)\!=\!\{((4,3),(1)^1,H),((4,3),(1)^1,F)\} $.
		The state $ ((4,3),(1)^1,F) $ corresponds to the fact, shown in Example \ref{EKTCD}, that the fault may not be diagnosed by $ a_1 $.
		\hfill \ensuremath{\Box}
	\end{example}
	
	\begin{figure}[t]
		\centering
		\subfigure[$ \mathcal{R}_{1} $]
		{\label{R2}
			\includegraphics[height=1.7cm]{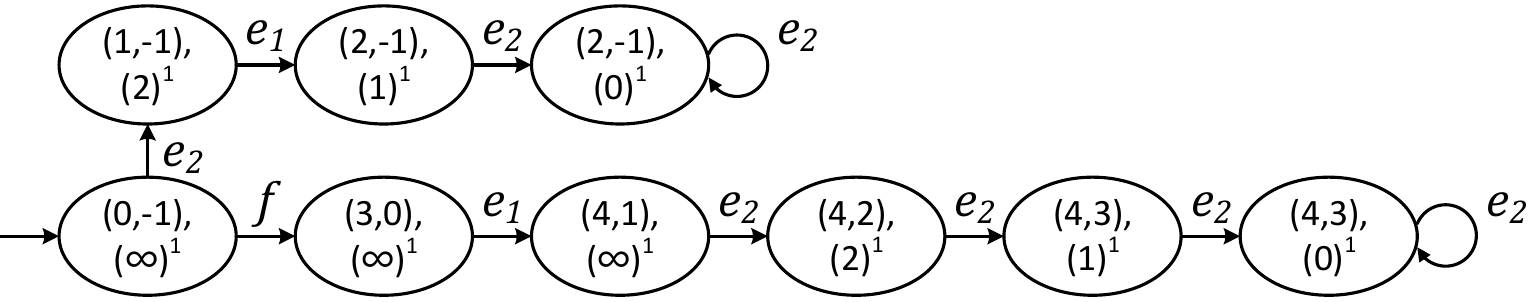}}
		\subfigure[Part of $ \mathcal{M}_{E_{1}}(\mathcal{R}_1 ) $]
		{\label{RM2}
			\includegraphics[height=2cm]{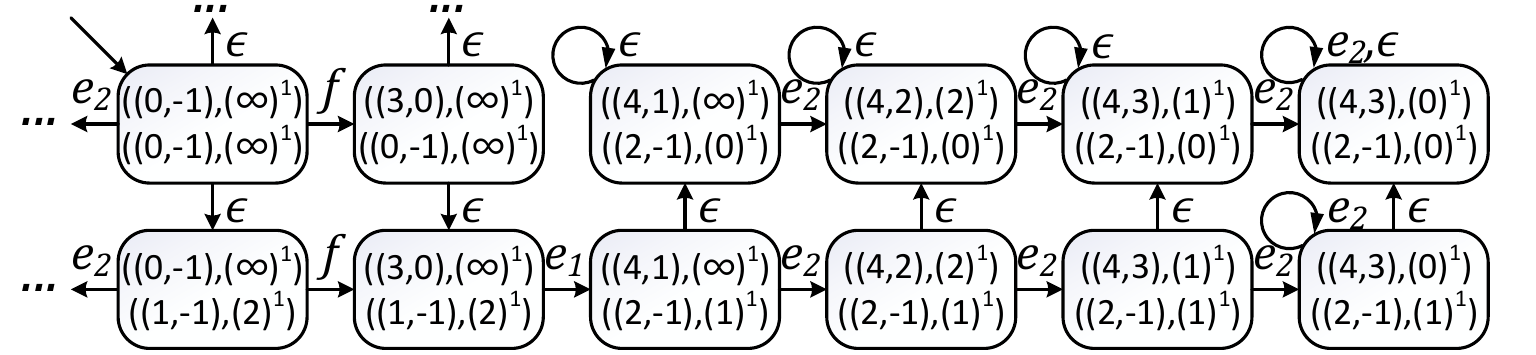}}
		\subfigure[$ \hat{\mathcal{R}}_1 $]
		{\label{DR2}
			\includegraphics[height=2.1cm]{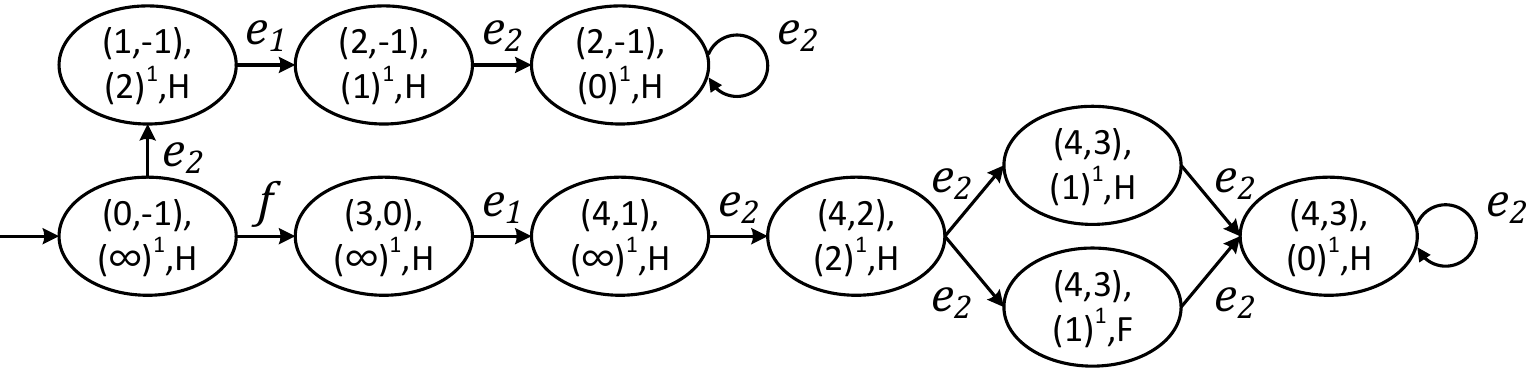}}
		\caption{Part of $ M $-machine $ \mathcal{M}_{E_{1}}(\mathcal{R}_1 ) $ that may include the fault states and the reconstructed automaton $ \hat{\mathcal{R}}_1 $}
	\end{figure}
	
	Using the automaton $ \hat{\mathcal{R}}_i $, we then construct the $ M $-machine $ \mathcal{M}_{E_{k}}(\hat{\mathcal{R}}_i) $ to obtain $ Z_{E_k}^C $ and further run Algorithm \ref{AOS} with $ \hat{\mathcal{R}}_i $ and $ E_k $ to get the augmented delay recorder 
	\begin{equation}\label{Rik} 
	\mathcal{R}_{i,k} = (X_{i,k}, E, \alpha_{i,k}, X_{i,k,0}),
	\end{equation}
	where similar to $ a_i $, we can obtain $ n_k $ and denote the delay value of $ a_k $ as $ |\cdot|_k^{j_k} $ ($ j_k\in \{1,\dots,n_k\} $).
	Then, using $ E_k $, the $ M $-machine 
	\begin{equation}\label{Mik}
	\mathcal{M}_{E_k}(\mathcal{R}_{i,k})=(Z_{i,k}, E \cup \{\epsilon\}, \delta_{i,k}, Z_{i,k,0})
	\end{equation} 
	is obtained, where we still denote $ I_1(z) $ as the first state component, $ I_2(z) $ as the second state component for each state $ z \in Z_{i,k} $.
	Finally, we define a diagnosis function $ \psi : z \rightarrow \{H,F\} $ as follows:
	$\forall z \in Z_{i,k}, $
	\begin{equation}\label{DF}
	\psi(z) = \left\{
	\begin{aligned}
	F , & \quad \text{if } z \in \mathcal{VC}_k; \\
	H , & \quad \text{otherwise};
	\end{aligned}
	\right.
	\end{equation}
	where condition $ z \in \mathcal{VC}_k $ is defined as:
	\begin{equation}\label{vck}
	|I_1(z)|_d \! = \! F, |I_2(z)|_f \! = \! -1, |I_1(z)|_k^{j_k} \! > \! 0, |I_2(z)|_k^{j_k} \! > \! 0.
	\end{equation}
	With a slight abuse of notation,	although the notation $ |\cdot|_d $ is defined for states in $ \hat{\mathcal{R}}_i $, we denote $ |I_1(z)|_d=F $ for $ I_1(z) \in X_{i,k} $ in $ \mathcal{R}_{i,k} $.
	This is possible because $ \mathcal{R}_{i,k} $ is built from $ \hat{\mathcal{R}}_i $, the only difference being that $ X_{i,k} $ contains the delay information of $ a_k $.
	Similarly, we also allow the states in $ X_{i,k} $ to use $ |\cdot|_f $.

	Now we are in the position to verify $ K^T\! $-codiagnosability with the following theorem.
	\begin{theorem}\label{TKT}
		Let $G$ in (\ref{S}) be the system model,  $ E_1 $ and $ E_2 $ be the set of observable events for agents $ a_1 $ and $ a_2 $, $f$ be the fault events, $ \mathcal{M}_{E_k}(\mathcal{R}_{i,k}) $ in (\ref{Mik}) be the $ M $-machine built from the augmented delay recorder $ \mathcal{R}_{i,k} $ in (\ref{Rik}).
		Then, $ \mathcal{L}(G) $ is $ K^{T} $-codiagnosable w.r.t. $f$ if and only if 
		\begin{equation}\label{SI}
		\forall z \in Z_{i,k}, \psi(z) = H.
		\end{equation}
	\end{theorem}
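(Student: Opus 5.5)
We prove both directions by contraposition, linking states of $\mathcal{M}_{E_k}(\mathcal{R}_{i,k})$ to pairs of strings $(s,s')$ that witness a violation of (\ref{KTCDC}). The key preliminary step is a \emph{correspondence lemma} for each agent separately.

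For agent $a_i$, fix strings $s,s'\in\mathcal{L}(G)$ with $\Delta(s)=K$, $\Delta(s')=-1$ and $P_{E_i}(s)=P_{E_i}(s')$. The lemma asserts that
\[
P_{E_o}\bigl(\zeta^{\lceil |a_1a_2|/T\rceil}_{E_o\backslash E_i}(s)\bigr)\cap P_{E_o}\bigl(\zeta^{\lceil |a_1a_2|/T\rceil}_{E_o\backslash E_i}(s')\bigr)\neq\emptyset
\]
if and only if there is a state $z_i\in\delta_i(s)$ of $\mathcal{M}_{E_i}(\mathcal{R}_i)$, reached along the pair $(s,s')$, with $z_i\in\mathcal{VC}_i$ in (\ref{vci}).

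We prove the lemma by induction on the length of the string. The invariant is as follows. After a marked transition in $T_i^{j_i}$, the delay value is $\infty$. It is reset to $\lceil |a_1a_2|/T\rceil$ on the next event in $E_o\backslash E_i$, and it then decreases by one per step. So a positive delay value in both components means that the distinguishing events of $E_o\backslash E_i$ (those collected by \texttt{FORWARD1}/\texttt{FORWARD2}) occurred within the last $\lceil |a_1a_2|/T\rceil$ steps. By the definition of $\zeta$, this is exactly the situation in which $s$ and $s'$ still admit prefixes, within that window, that have equal $E_o$-projection.

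Here we use the fact from Section \ref{2} that $\delta_i(s)$ collects exactly the pairs of states reached by strings with equal $E_i$-projection, together with $\mathcal{L}(\mathcal{R}_i)=\mathcal{L}(\hat G)=\mathcal{L}(G)$. We must also show that the pruning in lines 3--20 of Algorithm \ref{AOS} discards only transition sets whose delays cannot influence membership in $Z^C_{E_i}$. This step is the main obstacle, since the algorithm is stated procedurally. We would argue it via the accessibility test $\mathcal{A}^{\hat G}$ in line 5 and in \texttt{FORWARD2}. If a transition set can never lead to both components of a confusing pair, then recording its delay does not change whether the pair is confusable, as illustrated in Example \ref{EI}. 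The same lemma holds for $a_k$ with $\mathcal{R}_{i,k}$ built from $\hat{\mathcal{R}}_i$, where the role of ``$|\cdot|_f=K$'' is played by $|\cdot|_d=F$.

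\emph{Sufficiency.} Suppose $\mathcal{L}(G)$ is not $K^T$-codiagnosable. Then there is $s$ with $\Delta(s)=K$ such that for $i=1$ there is a healthy $s'_1$, and for $i=2$ there is a healthy $s'_2$, each violating (\ref{KTCDC}). Applying the lemma to $(s,s'_i)$ gives a state of $\mathcal{M}_{E_i}(\mathcal{R}_i)$ in $\mathcal{VC}_i$. Hence the first component along $s$ in $\hat{\mathcal{R}}_i$ has $|\cdot|_d=F$. Since $\hat{\mathcal{R}}_i$ is a fault automaton, we apply the lemma for agent $k$ to the pair $(s,s'_k)$, lifting $s$ through the $F$-branch. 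This produces $z\in Z_{i,k}$ with $z\in\mathcal{VC}_k$, so $\psi(z)=F$ and (\ref{SI}) fails.

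\emph{Necessity.} Conversely, suppose $\psi(z)=F$ for some $z\in Z_{i,k}$. Unfold $z$ into a string pair $(s,s'_k)$ with $|I_1(z)|_d=F$. The $F$ label comes from some $z_i\in\mathcal{VC}_i$ along $s$, which yields a healthy $s'_i$. The converse direction of the lemma shows that $s'_i$ and $s'_k$ violate (\ref{KTCDC}) for $a_i$ and for $a_k$, respectively. Finally, since the counter saturates at $K$, we have $\Delta(s)=K$, which contradicts $K^T$-codiagnosability.

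A remaining subtlety is the consistency of the string $s$ across the two constructions. The $F$ label is attached to a state of $\hat{\mathcal{R}}_i$ rather than to a string, so one must check that the same first-component trajectory carries both witnesses. This follows because $I_1$ of each $M$-machine follows $s$ exactly, by rule 1) and rule 2) of $\delta$.
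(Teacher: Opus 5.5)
Your architecture is the paper's. You use contradiction in both directions. You use a per-agent correspondence between positive delay values in a $\mathcal{VC}$-state and a nonempty $\zeta$-intersection; your lemma is exactly the ``similar analysis'' the paper carries out inline for each agent, splitting the strings at the marked transitions $e^{f1},e^{c1}$, taking $P_{E_o}(s^{f1}e^{f1})=P_{E_o}(s^{c1}e^{c1})$ from \texttt{FORWARD1}, and separating a recorded delay with $|s^{f4}|<\lceil \frac{|a_1a_2|}{T}\rceil$ from the value $\infty$. You also chain the two agents through the label $|\cdot|_d=F$, as the paper does.

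The genuine gap is the step you yourself call the main obstacle, and the argument you sketch for it does not match Algorithm \ref{AOS}. You plan to show that lines 3--20 only discard transition sets that can never lead to both components of a confusing pair. That is what the accessibility test in \texttt{FORWARD2} guarantees for the sets that get marked, but it is not what the two pruning passes do. Lines 11--20 drop an index purely because its family $Y_i^j$ is dominated by inclusion by another index's family, with no reachability test at all. In line 5, when a segment has a single outgoing $E_o\backslash E_i$ group, the first bracket is vacuous. The index is then removed as soon as no confusing pair lies inside the segment, even if both components of a confusing pair are reached through that group. So the marks on the very transitions $e^{f1},e^{c1}$ that end your $\zeta$-witness prefixes can be deleted, and your invariant then has no delay to track. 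The paper does not argue that discarded sets are irrelevant; it re-selects the witness. If line 5 cancels the mark, an event in $s^{f2}$ and one in $s^{c2}$ are still marked. If lines 11--20 drop the index, the paper uses the conditions in lines 12 and 14 to obtain another marked pair and continues with it without loss of generality. The direction ``nonempty intersection $\Rightarrow$ $\mathcal{VC}_i$-state'' of your lemma needs this replacement argument, not the accessibility one.

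Your closing remark on the consistency of $s$ also does not establish what the necessity direction needs. There you must produce $s_i'$ with $P_{E_i}(s_i')=P_{E_i}(s)$, so the particular $z_i\in\mathcal{VC}_i$ behind the label $F$ must lie in $\delta_i(s)$. The fact that $I_1$ follows $s$ along runs of $\mathcal{M}_{E_i}(\mathcal{R}_i)$ only shows that $M$-machine runs project to runs of $\hat{\mathcal{R}}_i$, which is what sufficiency uses. The converse can fail. $\hat{\mathcal{R}}_i$ erases second components and identifies states by first component and label (cf.\ Example \ref{EM}). A run of $\hat{\mathcal{R}}_i$ on $s$ may therefore splice transitions of different $M$-machine runs, and the final $F$ may come from a $z_i$ reached only along another string $w$ with $I_1(z_i)\in\alpha_i(w)$. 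The paper takes the same step for granted (``from $I_1(z)\in\alpha_{i,k}(s^f)$ \dots\ recalling the property of $M$-machine''). To close it, either keep the states of $\mathcal{M}_{E_i}(\mathcal{R}_i)$ distinct in $\hat{\mathcal{R}}_i$ (hiding rather than merging the second component), or prove that every run of $\hat{\mathcal{R}}_i$ on $s$ ending in an $F$-state lifts to an $M$-machine run on $s$.
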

\vspace{0.24em}
	\begin{proof}
		($ \Rightarrow $) 
		By contradiction, let us first suppose that $ \mathcal{L}(G) $ is $ K^{T} $-codiagnosable w.r.t. $f$ while $ \exists z \in Z_{i,k} $, s.t. $ \psi(z) = F $. 
		Then, we have that $ z $ satisfies (\ref{vck}).
		
		First, we consider $ a_k $: combining (\ref{vck}) and (\ref{vci}), we have $ \exists j \in \{1,\dots,n_k\} $, s.t. $ |I_1(z)|_k^j>0 $, $ |I_2(z)|_k^j>0 $, $ |I_1(z)|_f=K $, $ |I_2(z)|_f=-1 $ and $ I_1(z),I_2(z) \in X_{i,k} $.
		
		Since $ |I_1(z)|_k^j>0 $ and $ |I_2(z)|_k^j>0 $, there must be a pair of event strings: $ s^f,s_k^c \in \mathcal{L}(\mathcal{R}_{i,k}): I_1(z) \in \alpha_{i,k}(s^f) \wedge I_2(z) \in \alpha_{i,k}(s_k^c) \wedge P_{E_k}(s^f)=P_{E_k}(s_k^c) $ such that two transitions are marked, where we denote the events of the marked transition as $ e^{f1} $ in $ s^f $ and $ e^{c1} $ in $ s_k^c $.
		Then, we further denote $ s^f = s^{f1}e^{f1}s^{f2} $ and $ s_k^c= s^{c1}e^{c1}s^{c2} $.
		Recalling the \texttt{FORWARD1} procedure in Algorithm \ref{AOS}, we have $ P_{E_o}(s^{f1}e^{f1})=P_{E_o}(s^{c1}e^{c1}) $.
		We now consider the following two cases:
		\begin{itemize}
			\item[i)] If the delay of an event $ e^{f2} $ (or $ e^{c2} $) $ \!\in\! E_o \backslash E_k $ in $ s^{f2} $ ($ s^{c2} $) is recorded, then the system will enter $ I_1(z) $ (or $ I_2(z) $) within $ \lceil \frac{|a_1a_2|}{T} \rceil\!-\!1 $ steps after the occurrence of $ e^{f2} $ (or $ e^{c2} $), that is, $ 0\!<\!|I_1(z)|_k^j \!\leq\! \lceil \frac{|a_1a_2|}{T} \rceil $ (or $ 0\!<\!|I_1(z)|_k^j\!\leq\! \lceil \frac{|a_1a_2|}{T} \rceil $).
			In this case, we further denote $ s^{f2} \!=\! s^{f3}e^{f2}s^{f4} $ (or $ s^{c2} \!=\! s^{c3}e^{c2}s^{c4} $), where $ |s^{f4}|\!<\!\lceil \frac{|a_1a_2|}{T} \rceil $ (or $ |s^{c4}|\!<\!\lceil \frac{|a_1a_2|}{T} \rceil $).
			\item[ii)] If no delay in $ s^f $ (or $ s_k^c $) is recorded, then the system will enter $ I_1(z) $ (or $ I_2(z) $) without any occurrence of the events in $ E_o \backslash E_k $, that is, $ |I_1(z)|_k^j\!=\!\infty $ (or $ |I_1(z)|_k^j\!=\!\infty $).
		\end{itemize}
		Recalling the \texttt{FORWARD1} procedure in Algorithm \ref{AOS}, we know that $ P_{E_o\backslash E_k}(s^{f1}e^{f1})=P_{E_o\backslash E_k}(s^{f1}e^{f1}s^{f3}) $ and $ P_{E_o\backslash E_k}(s^{c1}e^{c1})=P_{E_o\backslash E_k}(s^{c1}e^{c1}s^{c3}) $ in case i), and $ P_{E_o\backslash E_k}(s^{f1}e^{f1})=P_{E_o\backslash E_k}(s^f) $ and $ P_{E_o\backslash E_k}(s^{c1}e^{c1})=P_{E_o\backslash E_k}(s_k^c) $ in case ii).
		Then, we have $ s^{f1}e^{f1} \in \zeta_{E_o \backslash E_k}^{\lceil \frac{|a_1a_2|}{T} \rceil}(s^f) $ and $ s^{c1}e^{c1} \in \zeta_{E_o \backslash E_k}^{\lceil \frac{|a_1a_2|}{T} \rceil}(s_k^c) $, indicating that $ P_{E_o}(\zeta_{E_o \backslash E_k}^{\lceil \frac{|a_1a_2|}{T} \rceil}(s^f)) \cap P_{E_o}(\zeta_{E_o \backslash E_k}^{\lceil \frac{|a_1a_2|}{T} \rceil}(s_k^c)) \neq \emptyset $.
		
		Now we consider $ a_i $: we know that $ \mathcal{R}_{i,k} $ is built with Algorithm \ref{AOS} from $ \hat{\mathcal{R}}_i $ which is reconstructed from the $ M $-machine $ \mathcal{M}_{E_{i}}(\mathcal{R}_i)=(Z_i, E \cup \{\epsilon\}, \delta_i, Z_{i0}) $.
		From $ |I_1(z)|_d=F $, we have the corresponding $ |\hat{x}_i|_d=F $, and further $ z_i $ satisfies $ \exists j' \in \{1,\dots,n_i\} $, s.t. $ |I_1(z_i)|_i^{j'}>0 $, $ |I_2(z_i)|_i^{j'}>0 $, $  |I_1(z_i)|_f=K $ and $ |I_2(z_i)|_f=-1 $, where $ I_1(z_i),I_2(z_i) \in X_{i} $ in $ \mathcal{R}_i = (X_i, E, \alpha_i, X_{i0}) $.
		Then, from $ I_1(z) \in \alpha_{i,k}(s^f) $, we know that $ I_1(z_i) \in \alpha_i(s^f) $.
		Recalling the property of $ M $-machine, $ \exists s_i^c \in \mathcal{L}(\mathcal{R}_i): I_2(z_i) \in \alpha_i(s_i^c) $, s.t. $ P_{E_i}(s_i^c)=P_{E_i}(s^f) $. 
		And with a similar analysis as above, we have $ P_{E_o}(\zeta_{E_o \backslash E_i}^{\lceil \frac{|a_1a_2|}{T} \rceil}(s^f)) \cap P_{E_o}(\zeta_{E_o \backslash E_i}^{\lceil \frac{|a_1a_2|}{T} \rceil}(s_i^c)) \neq \emptyset $.
		
		To sum up, for the event string $ s^f \in \mathcal{L}(\mathcal{R}_{i,k}) = \mathcal{L}(\mathcal{R}_{i}) = \mathcal{L}(G): \Delta(s^f)=K $, $ \forall l \in \{1,2\} $, $ P_{E_l}(s^f) = P_{E_l}(s_l^c) $, $ P_{E_o}(\zeta_{E_o \backslash E_l}^{\lceil \frac{|a_1a_2|}{T} \rceil}(s^f)) \cap P_{E_o}(\zeta_{E_o \backslash E_l}^{\lceil \frac{|a_1a_2|}{T} \rceil}(s_l^c)) \neq \emptyset $ and $ \Delta(s_l^c) = -1 $.
		In other words, $ \mathcal{L}(G) $ is not $ K^{T} $-codiagnosable w.r.t. $f$, resulting in a violation.
		
		($ \Leftarrow $)
		By contradiction, let us suppose that $ \forall z \in Z_{i,k}, \psi(z) = H $ while $ \mathcal{L}(G) $ is not $ K^{T} $-codiagnosable w.r.t. $f$. 
		Then, we have that there exists $ s \in \mathcal{L}(G): \Delta(s) = K $, such that condition (\ref{KTCDC}) is violated for $ a_1 $ and $ a_2 $. 
		
		First, we consider $ a_i \!:\! \exists s_i^c \!\in\! \mathcal{L}(\hat{G})=\mathcal{L}(\mathcal{R}_{i})\!:\! \Delta(s_i^c) \!=\! -1 $, s.t. $ P_{E_i}(s) \!=\! P_{E_i}(s_i^c) $, $ P_{E_o}(\zeta_{E_o \backslash E_i}^{\lceil \frac{|a_1a_2|}{T} \rceil}(s)) \!\cap\! P_{E_o}(\zeta_{E_o \backslash E_i}^{\lceil \frac{|a_1a_2|}{T} \rceil}(s_i^c)) \!\neq\! \emptyset $.
		
		Then, we have that $ \exists e^{f1}, e^{c1} \in E_o\backslash E_i: s=s^{f1}e^{f1}s^{f2}, s_i^c=s^{c1}e^{c1}s^{c2} $, s.t. $ P_{E_o}(s^{f1}e^{f1}) =P_{E_o}(s^{c1}e^{c1}) \in P_{E_o}(\zeta_{E_o \backslash E_i}^{\lceil \frac{|a_1a_2|}{T} \rceil}(s)) \cap P_{E_o}(\zeta_{E_o \backslash E_i}^{\lceil \frac{|a_1a_2|}{T} \rceil}(s_i^c)) $.
		Since $ P_{E_o}(s^{f1}e^{f1})=P_{E_o}(s^{c1}e^{c1}) $, the transitions of $ e^{f1} $ and $ e^{c1} $ in $ s $ and $ s_i^c $ must be marked by an index in line 2 of Algorithm \ref{AOS}.
		Nevertheless, after the first check in lines 3-9, the mark on $ e^{f1} $ and $ e^{c1} $ may be canceled, but an event in $ s^{f2} $ and an event in $ s^{c2} $ will still be marked according to the condition in line 5.
		Hence, we can regard $ e^{f1} $ and $ e^{c1} $ as the marked transition without loss of generality.
		Next, after the second check in lines 11-20, the transition $ e^{f1} $ and $ e^{c1} $ in $ s $ and $ s_i^c $ may not be marked, but there must be another pair of event strings marking $ e^{f1} $ and $ e^{c1} $ according to the conditions in lines 12 and 14.
		Hence, we can regard $ s $ and $ s_i^c $ as the pair of event strings where $ e^{f1} $ and $ e^{c1} $ are marked without loss of generality.
		Now we consider $ j_i \in \{1,\dots,n_i\} $ as the index that marks the transitions of $ e^{f1} $ and $ e^{c1} $ in $ s $ and $ s_i^c $.
		Recalling the \texttt{RECORD} procedure in Algorithm \ref{AOS}, there are two cases to be analysed: 
		\begin{itemize}
			\item[i)] If the delay of an event $ e^{f2} $ (or $ e^{c2} $) after $ e^{f1} $ (or $ e^{c1} $) is recorded, then we can denote $ s^{f2}=s^{f3}e^{f2}s^{f4} $ (or $ s^{c2}=s^{c3}e^{c2}s^{c4} $).
			Here we know $ |s^{f4}|<\lceil \frac{|a_1a_2|}{T} \rceil $ (or $ |s^{c4}|<\lceil \frac{|a_1a_2|}{T} \rceil $), otherwise there will be a violation that $ P_{E_o}(s^{f1}e^{f1}) \notin P_{E_o}(\zeta_{E_o \backslash E_i}^{\lceil \frac{|a_1a_2|}{T} \rceil}(s)) $ (or $ P_{E_o}(s^{c1}e^{c1}) \notin P_{E_o}(\zeta_{E_o \backslash E_i}^{\lceil \frac{|a_1a_2|}{T} \rceil}(s_i^c)) $).
			Since the delay is recorded as $ \lceil \frac{|a_1a_2|}{T} \rceil $ steps, there must be a state $ x_i \in \alpha_i(s): |x_i|_i^{j_i} >0 $ (or $ x_i' \in \alpha_i(s_i^c): |x_i'|_i^{j_i} >0 $).
			\item[ii)] If no delay after $ e^{f1} $ (or $ e^{c1} $) is recorded, then there must be a state $ x_i \in \alpha_i(s): |x_i|_i^{j_i} = \infty $ (or $ x_i' \in \alpha_i(s_i^c): |x_i'|_i^{j_i} = \infty $).
		\end{itemize}
		Since $ P_{E_i}(s) = P_{E_i}(s_i^c) $, we have $ (x_i, x_i') \in \delta_i(s) \subseteq Z_i $ in $ \mathcal{M}_{E_i}(\mathcal{R}_i) $ with $ |x_i|_i^{j_i} >0 $, $ |x_i'|_i^{j_i} >0 $, $ |x_i|_f=K $ and $ |x_i'|_f=-1 $, which means the corresponding state $ \hat{x}_i \in \hat{\alpha}_i(s) $ in $ \hat{\mathcal{R}}_i $, as well as $ x_{i,k} \in \hat{\alpha}_{i,k}(s) $ in $ \mathcal{R}_{i,k} $, satisfies $ |\hat{x}_i|_d = |x_{i,k}|_d = F $.
		
		Now we consider$ \, a_k \!\!:\! \exists s_k^c \!\in\! \mathcal{L}(\hat{G}) \!=\! \mathcal{L}(\mathcal{R}_{i,k})\!\!:\! \Delta(s_k^c)\!=\!-1,\;\! $s.t. $ P_{E_k}(s) \!=\! P_{E_k}(s_k^c), P_{E_o}(\zeta_{E_o \backslash E_k}^{\lceil \frac{|a_1a_2|}{T} \rceil}(s)) \!\cap\! P_{E_o}(\zeta_{E_o \backslash E_k}^{\lceil \frac{|a_1a_2|}{T} \rceil}(s_k^c)) \!\neq\! \emptyset $.
		Then as well, there must be an index $ j_k \!\in\! \{1,\dots,n_k\} $ marking the relevant transitions in $ s $ and $ s_k^c $.
		With a similar analysis as above, we have that $ \exists (x_{i,k},x_{i,k}') \!\in\! \delta_{i,k}(s) \!\subseteq\! Z_{i,k} $ in $ \mathcal{M}_{E_k}(\mathcal{R}_{i,k}) $, s.t. $ x_{i,k} \!\in\! \alpha_{i,k}(s), x_{i,k}' \!\in\! \alpha_{i,k}(s_k^c), |x_{i,k}|_k^{j_k} \!>\! 0, |x_{i,k}'|_k^{j_k} \!>\! 0 $.
		Since $ |x_{i,k}|_d \!=\! F $ and $ |x_{i,k}'|_f\!=\!-1 $, we have $ \psi((x_{i,k},x_{i,k}')) \!=\! F $, resulting in a violation, which completes the proof.
	\end{proof}
	
%	{\color{blue} 
%		For the verification of $ K^T $-codiognosability of $ \mathcal{L}(G) $, we start from the automaton $ \hat{G} $, run Algorithm \ref{AOS} to obtain delay recorder $ \mathcal{R}_i $ 	($ i\in\{1,2\} $), and construct the automaton $ \hat{\mathcal{R}}_i $ derived from the $ M $-machine $ \mathcal{M}_{E_{i}}(\mathcal{R}_i) $.
%		Next, we execute Algorithm \ref{AOS} on $ \hat{\mathcal{R}}_i $ to obtain $ \mathcal{R}_{i,k} $	($ k\in\{1,2\}, k\neq i $)  and build the $ M $-machine $ \mathcal{M}_{E_{k}}(\mathcal{R}_{i,k}) $ whose states are then checked by the diagnosis function $ \psi $.
%	}
	
	\begin{figure}[htbp]
		\centering
		\subfigure[$ \mathcal{R}_{1,2} $]
		{\label{R21}
			\includegraphics[height=2.2cm]{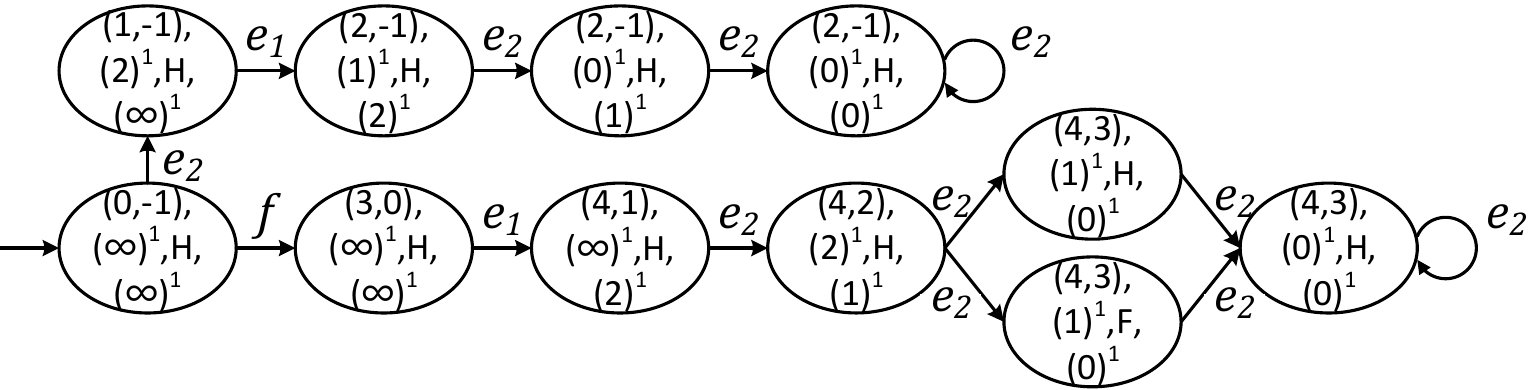}}
		\subfigure[Part of $ \mathcal{M}_{E_{2}}(\mathcal{R}_{1,2}) $]
		{\label{RM1}
			\includegraphics[height=2.8cm]{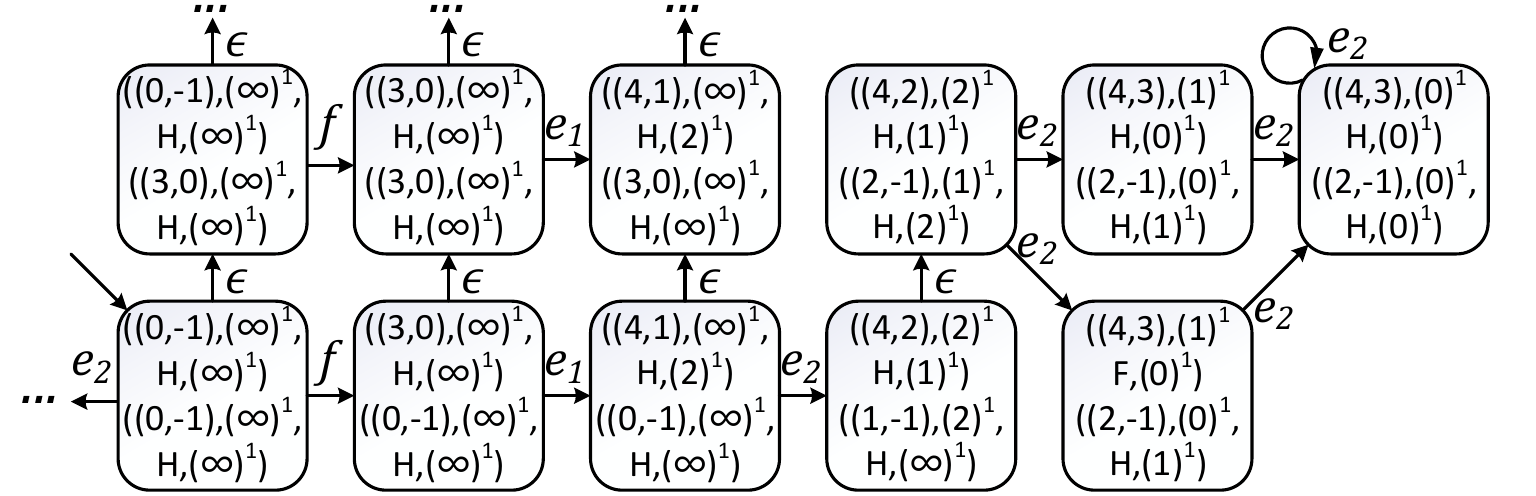}}
		\caption{Augmented delay recorder $ \mathcal{R}_{1,2} $ and part of $ M $-machine $ \mathcal{M}_{E_{2}}(\mathcal{R}_{1,2}) $ that may include the fault state}
	\end{figure}
	
	\begin{example}\label{EKT}(Verifying $ K^T $-codiagnosability).
		Considering the automaton $ \hat{\mathcal{R}}_1 $ in Example \ref{EM}, we build the $ M $-machine $ \mathcal{M}_{E_{2}}(\hat{\mathcal{R}}_1) $ to get $ Z_{E_2}^C=\{(((4,3),(1)^1,F), $ $ ((2,-1),(0)^1,H))\} $.
		Then, we run Algorithm \ref{AOS} with $ \hat{\mathcal{R}}_1 $, $ \lceil \frac{|a_1a_2|}{T} \rceil $, $ Z_{E_2}^C $, $ E_o $ and $ E_2 $ to obtain the augmented delay recorder $ \mathcal{R}_{1,2} $ as shown in Fig. \ref{R21}.
		Finally, the $ M $-machine $ \mathcal{M}_{E_{2}}(\mathcal{R}_{1,2})=(Z_{1,2}, E \cup \{\epsilon\}, \delta_{1,2}, Z_{0}) $ is constructed with $ E_2 $: Fig. \ref{RM1} shows 11 of the 31 states of $ \mathcal{M}_{E_{2}}(\mathcal{R}_{1,2}) $, where the omitted 20 states $ z \in Z_{1,2} $ obviously satisfy $ |I_1(z)|_d=H $ according to (\ref{vck}).
		The only state $ z $ satisfying $ |I_1(z)|_d=F $ is $ (((4,3),(1)^1,F,(0)^1), ((2,-1),(0)^1,H,(1)^1)) $, however, we have $ |((4,3),(1)^1,F,(0)^1)|_2^1=0 $, violating (\ref{vck}).
		Hence, for any $ z \in Z_{1,2} $, we have $ \psi(z)=H $, indicating that $ \mathcal{L}(G) $ is $ K^{T} $-codiagnosable w.r.t. $f$ with $ K=3 $ and $ T=1 $.
		Despite $ a_1 $ failing to diagnose the fault (cf. Example \ref{EKTCD}), the diagnosis task can be fulfilled thanks to $ a_2 $, indicating that distributed diagnosability is possible if and only if at least one of the agents can diagnose the faults with the information received from other agents.
		\hfill \ensuremath{\Box}
	\end{example}

	\section{Conclusion}\label{8}
	
	In this paper, a novel framework has been presented to solve the problem of distributed fault diagnosis in discrete event systems with delays arising from transmission impairments. 
	A new notion of $ K^T $-codiagnosability was proposed that extends the well-known $ K $-codiagnosability to the distributed setting.
	Accordingly, a novel delay recorder structure and a new diagnosis function were proposed to verify $ K^T $-codiagnosability.
	Future work could consider more complex diagnosis problems with transmission impairments, such as distributed dynamic sensor activation.
	
	\bibliographystyle{IEEEtran}
	\bibliography{References}
	
\end{document}